\newcommand \ve{\varepsilon}
\newcommand \br{\mathbb{R}}
\newcommand \rk{\operatorname{rk}}
\newcommand \Ker{\operatorname{Ker}}
\newcommand \Der{\operatorname{Der}}
\newcommand \Span{\operatorname{Span}}
\newcommand \cB{\mathcal{B}}
\newcommand\ag{\mathfrak a}
\newcommand\kg{\mathfrak k}
\newcommand\g{\mathfrak g}
\newcommand\cs{\mathfrak c}
\newcommand\h{\mathfrak h}
\newcommand\z{\mathfrak z}
\newcommand\m{\mathfrak m}
\newcommand \so{\mathfrak{so}}
\newcommand \gl{\mathfrak{gl}}
\newcommand \ug{\mathfrak{u}}
\newcommand \s{\mathfrak{s}}
\newcommand \vg{\mathfrak{v}}
\newcommand \n{\mathfrak{n}}
\newcommand \p{\mathfrak{p}}
\renewcommand\t{\mathfrak t}
\newcommand \ad{\operatorname{ad}}
\newcommand \<{\langle}
\renewcommand \>{\rangle}
\newcommand \ip{\<\cdot,\cdot\>}
\newtheorem{theorem}{Theorem}
\newtheorem*{theorem*}{Theorem}
\newtheorem*{corollary*}{Corollary}
\newtheorem*{conj*}{Conjecture}
\newtheorem{proposition}{Proposition}
\newtheorem*{prop*}{Proposition}
\theoremstyle{definition}
\newtheorem*{definition*}{Definition}
\theoremstyle{remark}
\newtheorem*{notation*}{Notation}
\newtheorem*{algorithm*}{Algorithm}
\newtheorem*{example*}{Example}
\begin{document}

\title{The Structure of Geodesic Orbit Lorentz Nilmanifolds}

\author{Yuri Nikolayevsky}
\address{Department of Mathematical and Physical Sciences, La Trobe University, VIC 3086, Australia} 
\email{Y.Nikolayevsky@latrobe.edu.au}
\thanks{The first author was partially supported by ARC Discovery Grant DP210100951. }

\author{Joseph A. Wolf}
\address{Department of Mathematics, University of California, Berkeley, CA 94720-3840, USA}
\email{jawolf@math.berkeley.edu}
\thanks{The second author was partially supported by a Simons Foundation grant}
\thanks{Both authors thank the mathematical research institute MATRIX in 
Australia where part of this research was performed}

\subjclass[2020]{53C30, 17B30, 05C25}

\keywords{Lorentz nilmanifold, geodesic orbit manifold, naturally reductive 
manifold, weakly symmetric manifold} 

\begin{abstract}
The geodesic orbit property is useful and interesting in Riemannian geometry.
It implies homogeneity and has important classes of Riemannian manifolds as
special cases.  Those classes include weakly symmetric Riemannian manifolds
and naturally reductive Riemannian manifolds.  The corresponding results for
indefinite metric manifolds are much more delicate than in Riemannian 
signature, but in the last few years 
important corresponding structural results were proved for geodesic 
orbit Lorentz manifolds.  Here we carry out a major step in the 
structural analysis of geodesic orbit Lorentz nilmanifolds.  Those are the 
geodesic orbit
Lorentz manifolds $M = G/H$ such that a nilpotent analytic subgroup of $G$
is transitive on $M$.  Suppose that there is a reductive decomposition 
$\g = \h \oplus \n$ (vector space direct sum) with $\n$ nilpotent. 
When the metric is nondegenerate on $[\n,\n]$ we 
show that $\n$ is abelian or 2-step nilpotent (this is the same result as for 
geodesic orbit Riemannian nilmanifolds), and when the metric
is degenerate on $[\n,\n]$ we show that $\n$ is a Lorentz 
double extension corresponding to a geodesic orbit Riemannian nilmanifold.  
In the latter case we construct examples to 
show that the number of nilpotency steps is unbounded.

\end{abstract}

\maketitle

\section{Introduction}
\label{intro}

A pseudo-Riemannian manifold $(M,ds^2)$ is called a \emph{geodesic orbit 
manifold} (or a manifold with homogeneous geodesics, or simply a $GO$ 
manifold), if every geodesic of $M$ is an orbit of a $1$-parameter subgroup 
of the full isometry group $I(M) = I(M,ds^2)$.  One loses no generality 
if one replaces $I(M)$ by 
its identity component $I^0(M)$.  If $G$ is a transitive Lie subgroup of
$I^0(M)$, so $(M,ds^2) = (G/H,ds^2)$ where $H$ is an isotropy subgroup of
$G$, and if every geodesic of $M$ is an orbit of a 
$1$-parameter subgroup of $G$, then we say that $(M,ds^2)$ is a 
\emph{$G$-geodesic orbit manifold}, or a $G$-$GO$ manifold.  Clearly every 
$G$-$GO$ manifold is a $GO$ manifold, but not vice versa.  The class of 
geodesic orbit manifolds includes 
(but is not limited to) symmetric spaces, weakly symmetric spaces, normal 
and generalized normal homogeneous spaces, and naturally reductive spaces. For 
the current state of knowledge in the theory of Riemannian geodesic orbit 
manifolds we refer the reader to \cite{BN} and its bibliography.

In this paper, we study the $GO$ condition for pseudo-Riemannian nilmanifolds
$(N,ds^2)$, relative to subgroups $G \subset I(N)$ of the form 
$G = N \rtimes H$, where $H$ is an isotropy subgroup.  Most of our results
apply to the case where $(N,ds^2)$ is a Lorentz manifold.

Our results for $G$-$GO$ manifolds $(M,ds^2) = (G/H,ds^2)$ 
require the coset space $G/H$ to be reductive. In other words they make use of
an ${\rm Ad}_G(H)$--invariant decomposition $\g = \m \oplus \h$.  Very few 
structural results are known for indefinite metric $GO$ manifolds that are 
not reductive, and we always assume that $G/H$ is reductive.

Recall that a 
pseudo-Riemannian \emph{nilmanifold} is a pseudo-Riemannian manifold 
admitting a transitive nilpotent Lie group of isometries.  
In the Riemannian case, the full isometry group of a nilmanifold $(N,ds^2)$, 
where $N$ is a transitive nilpotent group of isometries, is the semidirect
product $I(N) = N \rtimes H$, where $H$ is the group of all isometric 
automorphisms of $(N,ds^2)$ \cite[Theorem~4.2]{W1963}. In other words, $N$
is the nilradical of $I(N)$.  In the pseudo-Riemannian 
cases, $I(N)$ might still contain $N \rtimes H$ and yet be strictly larger. In 
indefinite signatures of metric a nilmanifold is not necessarily reductive 
as a coset space of $I(N)$, and even when it is, $N$ does not have to be 
a normal subgroup of $I(N)$.  Here the $GO$ condition does not rescue us, for
there exist $4$-dimensional, Lorentz $GO$ nilmanifolds that are reductive 
relative to $I(N)$, but for which $N$ is not an ideal in $I(N)$ 
\cite[Section~3]{dBO}. Moreover, already in dimension $4$ (the lowest 
dimension for homogeneous pseudo-Riemannian spaces $G/H$ with 
$H$ connected that are not reductive), every non-reductive space is a 
$GO$ manifold when we make 
a correct choice of parameters \cite[Theorem~4.1]{CFZ}. A complete 
classification of pseudo-Riemannian $GO$ manifolds of dimension $4$ is given 
in \cite{CZ}.

In Section \ref{s:prel} we recall some basic facts on reductive geodesic 
orbit spaces.  In particular the Geodesic Lemma (recalled as 
Proposition \ref{GeoL} below) gives an
algebraic condition \newline $\<[T+A,T']_\m,T\> = k \<T,T'\>$ for a reductive
pseudo-Riemannian homogeneous space $M=G/H$, with $\g = \h \oplus \m$,
to be $GO$. We also recall the notion of geodesic graph and use it in 
Proposition \ref{p:natred} for a characterization of the naturally reductive 
condition.

In Section \ref{theorem1} we sharpen \cite[Theorem~7]{CWZ} to obtain a
basic structure result on reductive $GO$ Lorentz nilmanifolds $(G/H,ds^2)$.
Write $\g = \h \oplus \n$ with $\n$ nilpotent and let $\ip$ denote the
inner product on $\n$ defined by $ds^2$.  If $\ip|_{[\n,\n]}$
is nondegenerate then \cite[Theorem~7]{CWZ} says $N$ is either abelian, or 
$2$-step nilpotent, or $4$-step nilpotent. While there are many
examples of abelian and of $2$-step nilpotent, there were no examples of
$4$-step nilpotent.  Our Theorem \ref{th:lorentzdernondeg}
eliminates the $4$-step possibility.  That is the main result of this
paper. Theorem~\ref{th:lorentzdernondeg} in Section~\ref{theorem1} is 
obtained as a corollary of 
Theorem~\ref{th:dernondeg}; the latter is valid in higher signatures provided 
the derived algebra of $\n$ is either abelian or Lorentz.

In Section \ref{theorem2} we recall the notion of double extension and
use it to obtain a complement, Theorem \ref{th:derdeg}, to 
Theorem~\ref{th:lorentzdernondeg}.  While Theorem~\ref{th:lorentzdernondeg} 
requires that $\ip|_{[\n,\n]}$ be nondegenerate, Theorem \ref{th:derdeg}
requires that it be degenerate, and then it shows that $(G/H,ds^2)$
is a Lorentz double extension of a Riemannian $GO$ nilmanifold.

In Section \ref{examples} we construct a large family of naturally
reductive
$GO$ Lorentz nilmanifolds $(G/H,ds^2)$ that are double extensions of 
Riemannian $GO$ nilmanifolds. There the 
transitive nilpotent groups are $r$-step nilpotent for unbounded $r$.
Theorem \ref{th:examples} extracts a few of those double extension
manifolds and shows that for every $d > 0$ there is a naturally reductive
Lorentz nilmanifold $(N,ds^2)$ of nilpotent step $\geq d$ and dimension $d+4$,
and a corresponding Lorentz nilmanifold $(N,ds^2)$ of nilpotent step $\geq d$
and dimension $d+10$ that is not naturally reductive.  

\section{Preliminaries}
\label{s:prel}

Let $M=G/H$ be a pseudo-Riemannian homogeneous space. As usual $\g$ and $\h$ 
denote the Lie algebras of $G$ and $H$.  In the Riemannian case there is 
an $\ad_\g(\h)$-module $\m$ such that $\g = \h \oplus \m$ as a linear space. 
This is not necessarily true in an arbitrary signature; if it is, the 
pseudo-Riemannian homogeneous space $M=G/H$ is called \emph{$G$-reductive}. 
Note that reductivity depends on the choice of the isometry group $G$. 
Any corresponding decomposition $\g = \h \oplus \m$ is called a 
\emph{reductive decomposition}.

The $GO$ condition for reductive spaces is given in the {\bf Geodesic Lemma}:
\begin{proposition} \label{GeoL}
Let $M=G/H$ be a reductive pseudo-Riemannian homogeneous space, with reductive 
decomposition $\g = \h \oplus \m$. Then $M$ is a $G$-geodesic orbit space 
if and only if, for any $T \in \m$, there exist $A = A(T) \in \h$ and 
$k=k(T) \in \br$ such that if $T' \in \m$ then
\begin{equation}\label{eq:golemma}
  \<[T+A,T']_\m,T\> = k \<T,T'\>.
\end{equation}
The subscript ${}_\m$ means the $\m$-component in $\g = \h + \m$.
\end{proposition}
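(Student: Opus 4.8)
The plan is to translate the geodesic orbit property into an infinitesimal condition at the base point $o = eH$, and then to identify the covariant acceleration of an orbit curve with the bracket expression in \eqref{eq:golemma}. By homogeneity it suffices to test the condition on geodesics issuing from $o$, and every such geodesic is determined, up to affine reparametrization, by its initial velocity, which we identify with a vector $T \in \m$ under the canonical isomorphism $T_o M \cong \m$. For $X \in \g$ let $X^*$ denote the associated Killing field, $X^*_p = \frac{d}{dt}\big|_{t=0}\exp(tX)\cdot p$; the orbit $\gamma_X(t) = \exp(tX)\cdot o$ then has initial velocity $\gamma_X'(0) = X_\m$ and, because each $\exp(tX)$ is an isometry carrying $X^*$ to itself, constant speed $\<\gamma_X',\gamma_X'\> \equiv \<X_\m,X_\m\>$. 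Thus $\gamma_X$ is an affine reparametrization of a geodesic precisely when its covariant acceleration is proportional to its velocity, i.e. $(\nabla_{X^*}X^*)_o = c\,X_\m$ for some $c \in \br$; equivariance under $\exp(tX)$ propagates this along the whole orbit with the same constant $c$.

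The core step is to compute $(\nabla_{X^*}X^*)_o$. Since $X^*$ is Killing and $\nabla$ is metric, for every $Y \in \m$ one has
\begin{equation*}
  \<\nabla_{X^*}X^*, Y^*\>_o = -\<X^*,\nabla_{Y^*}X^*\>_o = -\tfrac12\, Y^*\<X^*,X^*\>\big|_o ,
\end{equation*}
so that $\nabla_{X^*}X^* = -\tfrac12\grad\<X^*,X^*\>$. To evaluate $Y^*\<X^*,X^*\>$ at $o$ I would pull the vector $X^*_{\exp(sY)\cdot o}$ back to $o$ by the isometry $\exp(-sY)$, which identifies it with $(\Ad(\exp(-sY))X)^*_o$; hence $\<X^*,X^*\>(\exp(sY)\cdot o) = \<(\Ad(\exp(-sY))X)_\m,(\Ad(\exp(-sY))X)_\m\>$. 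Differentiating at $s=0$ and using $\frac{d}{ds}\big|_{0}\Ad(\exp(-sY))X = [X,Y]$ gives $Y^*\<X^*,X^*\>|_o = 2\<[X,Y]_\m, X_\m\>$, and therefore $\<(\nabla_{X^*}X^*)_o, Y\> = -\<[X,Y]_\m, X_\m\>$ for all $Y\in\m$.

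Combining the two steps, $\gamma_X$ is a reparametrized geodesic iff $-\<[X,Y]_\m,X_\m\> = c\,\<X_\m,Y\>$ for all $Y\in\m$; writing $X = A + T$ with $A = X_\h \in \h$ and $T = X_\m$, and setting $k = -c$, this is exactly \eqref{eq:golemma}. For the converse I would, given $T\in\m$, extract $A$ and $k$ from the hypothesis, form $X = T + A$, observe that $\gamma_X$ is then a geodesic up to reparametrization with initial velocity $T$, and invoke uniqueness of geodesics with prescribed initial position and velocity to conclude that $\gamma_X$ coincides as a curve with the geodesic determined by $(o,T)$; homogeneity then shows every geodesic is an orbit. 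I expect the main obstacle to be the pseudo-Riemannian bookkeeping around the scalar $k$: one must allow the reparametrization constant $c$ (equivalently $k$) to be nonzero, verify it is genuinely constant along the orbit, and reconcile this with the constant-speed argument that upgrades ``pregeodesic'' to ``geodesic.'' Feeding $T' = T$ into \eqref{eq:golemma}, and using that $Y\mapsto[A,Y]_\m$ is skew-symmetric with respect to $\ip$, yields $k\<T,T\> = 0$, so $k\neq 0$ forces $T$ to be null; this is precisely the point at which the Lorentz case departs from the Riemannian one and where the parametrization analysis must be carried out with care.
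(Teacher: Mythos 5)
The paper contains no proof of this statement to compare yours against: Proposition \ref{GeoL} is the Geodesic Lemma recalled verbatim from Du\v{s}ek--Kowalski \cite{DK}, and the paper simply cites it. What you have written is essentially the standard proof from the literature, and it is correct in substance. The core computation is right: the Killing equation gives $\nabla_{X^*}X^* = -\tfrac12\grad\<X^*,X^*\>$, the pullback identity $d(\exp(-sY))\,X^*_{\exp(sY)\cdot o} = (\Ad(\exp(-sY))X)^*_o$ gives $Y^*\<X^*,X^*\>|_o = 2\<[X,Y]_\m,X_\m\>$, and hence $\<(\nabla_{X^*}X^*)_o,Y\> = -\<[X,Y]_\m,X_\m\>$ for $Y\in\m$, which reduces the pregeodesic property of the orbit to \eqref{eq:golemma} with $X=T+A$ and $k=-c$.

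Three points need tightening, none fatal. (i) ``Affine reparametrization of a geodesic'' is a slip: an affine reparametrization of a geodesic is again a geodesic, which would force $c=0$; what you mean, and use, is that $\gamma_X$ is a pregeodesic (covariant acceleration parallel to velocity, with constant factor $c$ by equivariance), and when $c\neq0$ the affinizing change of parameter is exponential, $s=(e^{ct}-1)/c$. (ii) That exponential reparametrization maps $\br$ onto a half-line, so the orbit covers only a sub-interval of the affine domain of the complete geodesic; this is the well-known subtlety for null homogeneous geodesics (the only case in which $k\neq0$ can occur, as you correctly observe), and it is precisely where the definition of ``geodesic orbit'' must be read as permitting such reparametrizations --- this should be made explicit where you invoke uniqueness of geodesics in the converse direction. (iii) In the direction (GO $\Rightarrow$ \eqref{eq:golemma}), the orbit realizing the geodesic with initial data $(o,T)$ a priori only satisfies $X_\m=\lambda T$ with $\lambda\neq0$; one must rescale $X$ to $X/\lambda$, which is harmless since the condition is homogeneous in $X$ (the constant becomes $k=-c/\lambda$), but it is a step your write-up glosses over.
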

Substituting $T'=T$ one sees that $k(T)=0$ unless $T$ is a null vector. 
In particular, $k$ is always zero in the Riemannian signature. Any map 
$A: \m \to \h$ for which \eqref{eq:golemma} holds (with some function $k$) 
is called a \emph{geodesic graph}. If a geodesic graph exists (that is, 
if the space is $GO$), it can be chosen $\ad_\g(\h)$-equivariant, i.e. such that
$[L, A(T)]=A([L,T])$, for all $L \in \h$ and all $T \in \m$.

A pseudo-Riemannian homogeneous space $M=G/H$ is ($G$-)\emph{naturally 
reductive} if there is a reductive decomposition $\g = \h \oplus \m$ such 
that 
\begin{equation}\label{defnatred}
\text{if } T, T' \in \m \text{ then } \<[T', T]_\m, T\>=0. 
\end{equation}
In our nilmanifold case, if $G = N \rtimes H$, the space $G/H$ might be 
naturally reductive using a choice of a complementary $\h$-module $\m$ 
different from $\n$. In the case $G = N$ the natural reductivity condition 
says that the inner product on $\n$ is invariant under the adjoint
representation, so the metric on $N$ is \emph{bi-invariant}, in other
words invariant under both left and right translations. 
Kostant's criterion for natural reductivity in the Riemannian signature 
\cite[Theorem~4]{Kos} is valid as well in pseudo-Riemannian case 
\cite[Theorem~2.2]{Ova1}.

The property of being naturally reductive depends on the choice of group 
$G$ in the presentation $M=G/H$: both enlarging and reducing $G$ may lead 
to gaining or losing the natural reductivity property, even in the 
Riemannian setting \cite[\S~2]{DZ}.  This contrasts with the $GO$ condition, 
which is trivially preserved under enlarging the isometry group. 

A $G$-naturally reductive space is always $G'$-geodesic orbit for any 
$G' \supset G$. This is seen by taking $A=0$ and $k=0$ in the Geodesic Lemma. 
The converse fails even in Riemannian case, where there are $GO$ spaces that 
are not $G$-naturally reductive for any choice of the transitive group 
$G$ (\cite[Proposition~3]{Kap}, \cite[Theorem~5.3(I)]{KV}). Also, see
Theorem \ref{th:examples} below.

Proposition \ref{p:natred} below is useful for deciding whether a $GO$ space 
is naturally reductive relative to the same group $G$. The proof is essentially
the same as in \cite[Corollaire~2, Lemme~10]{Sze} for the affine case 
(although the ``only if" direction there requires $H$ to be compact), and 
in \cite[Proposition~2.10]{KV} for the Riemannian case. For completeness we 
include it below.

\begin{proposition} \label{p:natred} 
Let $M=G/H$ be a reductive $G$-$GO$ space with reductive decomposition 
$\g = \h \oplus \m$. Then $M$ is $G$-naturally reductive if and only if a 
geodesic graph $A: \m \to \h$ in the Geodesic Lemma can be chosen linear 
and $\ad_\g(\h)$-equivariant. 
\end{proposition}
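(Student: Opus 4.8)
The plan is to treat both implications with a single graph construction. Given any linear $\ad_\g(\h)$-equivariant map $\psi\colon\m\to\h$, I would form $\m_\psi=\{T+\psi(T):T\in\m\}$. Linearity of $\psi$ with $\psi(\m)\subseteq\h$ gives $\g=\h\oplus\m_\psi$, and equivariance gives $[L,T+\psi(T)]=[L,T]+\psi([L,T])\in\m_\psi$ for $L\in\h$, so $\m_\psi$ is again a reductive complement. Because $\psi(T)\in\h$, the vectors $T$ and $T+\psi(T)$ represent the same class of $\g/\h=T_oM$; hence under the transported metric $\<T_1+\psi(T_1),T_2+\psi(T_2)\>=\<T_1,T_2\>$, and $[S_2,S_1]_{\m_\psi}$ and $[S_2,S_1]_\m$ agree as elements of $\g/\h$.

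The first key step is a computation valid for every such $\psi$. Expanding $[S_1,S_2]_\m$ for $S_i=T_i+\psi(T_i)$ and pairing with $T_1$, the term $[\psi(T_1),\psi(T_2)]$ lies in $\h$ and drops out, while $\<[T_1,\psi(T_2)],T_1\>$ vanishes because $\ad_\g(\h)$ acts skew-symmetrically on $(\m,\ip)$ (as $H$ acts by isometries). What remains is exactly the Geodesic-Lemma quantity for the graph $\psi$, yielding $\<[S_2,S_1]_{\m_\psi},S_1\>=-\<[T_1+\psi(T_1),T_2]_\m,T_1\>$ for all $T_1,T_2\in\m$. Thus the natural-reductivity defect of $\m_\psi$ coincides, up to sign, with the left-hand side of \eqref{eq:golemma} for $A=\psi$.

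For the ``only if'' direction I would write the naturally reductive complement $\m'$ as the graph of $\phi\colon\m\to\h$, namely $\phi=p'|_\m-\id$ where $p'$ is the projection onto $\m'$ along $\h$; since $\m$ and $\m'$ are both $\ad_\g(\h)$-invariant linear complements, $\phi$ is automatically linear and equivariant. Natural reductivity of $\m'$ makes the right side of the identity above vanish, so $\<[T+\phi(T),T']_\m,T\>=0$ for all $T,T'\in\m$; hence $A=\phi$ is a linear equivariant geodesic graph (with $k\equiv0$), as required.

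The ``if'' direction is where the genuinely indefinite-signature difficulty appears, and it is the step I expect to be the main obstacle. Given a linear equivariant geodesic graph $A$, the identity above shows $\m_A$ is naturally reductive \emph{provided} the accompanying function $k$ vanishes identically; in Riemannian signature this is automatic, but for Lorentz metrics $k$ can be nonzero on null directions, so I must show that linearity of $A$ forces $k\equiv0$. I would set $Q(T',T):=\<[T+A(T),T']_\m,T\>$, which is linear in $T'$ and, since $A$ is linear, homogeneous quadratic in $T$, with $Q(T',T)=k(T)\<T,T'\>$ by the Geodesic Lemma. Fixing $v\in\m$, the quadratic $T\mapsto Q(v,T)$ vanishes on the hyperplane $\{\<T,v\>=0\}$ (there $k(T)$ is finite, computed from some $T'$ with $\<T,T'\>\ne0$), hence is divisible by the linear form $\<\cdot,v\>$; this forces $k$ to equal a fixed linear form. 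Finally $Q(T,T)=\<[A(T),T],T\>=0$ by skew-symmetry gives $k(T)\<T,T\>\equiv0$, and since $\<T,T\>$ is a nonzero quadratic while $k$ is linear, $k\equiv0$. The first step then makes $\m_A$ naturally reductive, completing the argument.
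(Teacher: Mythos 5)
Your proposal is correct and follows essentially the same route as the paper: both directions are handled by realizing reductive complements as graphs of (linear, $\ad_\g(\h)$-equivariant) maps $\m \to \h$ carrying the transported metric, so that the natural-reductivity defect of the graph coincides with the Geodesic-Lemma quantity \eqref{eq:golemma}. The only divergence is in disposing of $k$: the paper does this with a one-line continuity argument ($k$ vanishes on the dense set of non-null vectors, and $T \mapsto \<[T+A(T),T']_\m,T\>$ is polynomial in $T$ because $A$ is linear), whereas you use a polynomial-divisibility argument --- longer, but equally valid.
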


\begin{proof}
Let $M=G/H$ be a naturally reductive $G$-$GO$ space. Choose a reductive 
decomposition $\g=\h\oplus \m$ such that $\<[T', T]_\m, T\>=0$ for all 
$T, T' \in \m$. Let $\g=\h\oplus \p$ be any other reductive decomposition. 
Since both $\m$ and $\p$ are naturally identified with the tangent space of 
$(M,ds^2)$ at the base point, there is a uniquely defined 
$\ad_\g(\h)$-invariant 
isometry $\iota: \m \to \p$. Now for the decomposition $\g=\h\oplus \p$ the 
equation \eqref{eq:golemma} holds with $A(X)=\iota^{-1}X-X$ and $k(X)=0$, for 
all $X \in \p$. 

Conversely, given a reductive decomposition $\g=\h\oplus \p$ with an 
$\ad_\g(\h)$-equivariant linear map $A: \p \to \h$ such that \eqref{eq:golemma} 
holds for $X \in \p$ (forcing $k=0$ by continuity), we define 
$\m=\Span(X+A(X) \, | \, X \in \p)$, with the inner product such that the 
map $X \mapsto X + A(X)$ is an isometry. Then $\g=\h \oplus \m$, and $\m$ is 
an $\h$-module because $A$ is $\ad_\g(\h)$-equivariant. It is easy to check 
that $\<[T', T]_\m, T\>=0$ for all $T, T' \in \m$.
\end{proof}

In the Riemannian case, or more generally when $H$ is compact, the 
existence of a linear geodesic graph implies the existence of a linear, 
$\ad_\g(\h)$-equivariant geodesic graph, and hence is equivalent to natural 
reductivity \cite[Lemma~3]{CNN}.

\section{Eliminating \ensuremath{4}-Step When \ensuremath{[\texorpdfstring{\n}{\unichar{"1D52B}},\texorpdfstring{\n}{\unichar{"1D52B}}]} is Nondegenerate}
\label{theorem1}

Some years ago Gordon proved \cite[Theorem~2.2]{Gor} that a Riemannian $GO$ 
nilmanifold is at most $2$-step nilpotent.  More recently Chen, Wolf and
Zhang proved \cite[Theorem 7]{CWZ} that a connected Lorentz $G$-geodesic orbit
nilmanifold $M = G/H$, with $G = N \rtimes H$, $N$ nilpotent and
$\ip|_{[\n,\n]}$ nondegenerate\footnote{Nondegeneracy of $\ip|_{[\n,\n]}$
is stated in the paragraph before the statement of \cite[Theorem 7]{CWZ}
 and is recalled and used in the proof, but perhaps it could have been part 
of the statement itself.}, has
similar properties:  $\n$ is abelian, or $\n$ is $2$-step nilpotent, or 
$\n$ is $4$-step nilpotent.  Our theorems here eliminate the $4$-step 
possibility and go somewhat beyond the case of Lorentz signature.

Given a reductive homogeneous pseudo-Riemannian manifold $(G/H,ds^2)$,
where $G = N \rtimes H$ with $N$ nilpotent, identify 
$\n = {\rm Lie}(N)$ with the tangent space to $G/H$ at $1N$.

\begin{theorem}\label{th:dernondeg}
     Let $(M = G/H, ds^2)$ be a connected pseudo-Riemannian $G$-geodesic 
orbit nilmanifold where $G = N \rtimes H$ with $N$ nilpotent. Denote 
$\ip'=\ip|_{[\n,\n]}$. In each of the following cases, $N$ is either abelian 
or $2$-step nilpotent:
     \begin{enumerate}[label=\emph{(\alph*)},ref=\alph*]
	\item \label{it:n'd}
	$\ip'$ is of definite signature;
	\item \label{it:n'lcnon}
	$\ip'$ is of Lorentz signature and the centralizer of $[\n,\n]$ is non-degenerate;
	\item \label{it:n'lnl}
	both $\ip$ and $\ip'$ are of Lorentz signature.
     \end{enumerate}
\end{theorem}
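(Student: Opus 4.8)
The plan is to turn the Geodesic Lemma into a single algebraic identity on $\n$ and then run a descending--central--series argument that kills the top term of the series as soon as it lies inside a nondegenerate central subspace of $[\n,\n]$. First I set up the algebra. Since $G=N\rtimes H$, the ideal $\n$ carries the skew-symmetric derivations $\ad(\h)|_\n$, and for $T,T'\in\n$ both $[T,T']$ and $A\cdot T'=[A,T']$ (with $A\in\h$) already lie in $\n$, so the $\m$-projection in \eqref{eq:golemma} is vacuous. Using skew-symmetry of $A\cdot$ the Geodesic Lemma rewrites, for every $T\in\n$, as
\[ \ad_T^*\,T = k(T)\,T + A(T)\cdot T , \]
where $\ad_T^*$ is the metric adjoint of $\ad_T=[T,\cdot]$ and $k(T)=0$ whenever $T$ is non-null. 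As $\ip'=\ip|_{[\n,\n]}$ is nondegenerate in all three cases, I split $\n=\mathfrak{b}\oplus[\n,\n]$ orthogonally with $\mathfrak{b}=[\n,\n]^\perp$; note $\ad_X^*X=0$ for $X\in\mathfrak{b}$ because $[X,\cdot]$ lands in $[\n,\n]\perp X$.

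The engine of the proof is the following. Assume $N$ is $s$-step with $s\ge 3$ and let $\n_s$ be the last nonzero term of the lower central series, which is central and satisfies $\n_s\subseteq\n_{s-1}\subseteq[\n,\n]$. Suppose we are given a nondegenerate subspace $\cs\subseteq[\n,\n]$ that is central and invariant under every derivation of $\n$ and contains $\n_s$. I claim this forces $\n_s=0$, a contradiction. Fix $X\in\mathfrak{b}$ and a non-null $C\in\cs$, and set $T=X+C$. Since $C$ is central, $\ad_T^*T=\ad_X^*C$; pairing the identity with $\cs$ and using that $A(T)$ preserves $\cs$ yields $k(T)\,C+A(T)\cdot C\in\cs$ orthogonal to all of $\cs$, hence $=0$ by nondegeneracy, and pairing with $C$ (skew-symmetry, $C$ non-null) gives $k(T)=0$ and then $A(T)\cdot C=0$. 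Feeding this back gives $\ad_X^*C=A(T)\cdot X$, so for $W\in\n_{s-1}\subseteq[\n,\n]$,
\[ \<[X,W],C\> = \<W,\ad_X^*C\> = \<W,A(T)\cdot X\> = -\<A(T)\cdot W,X\> = 0, \]
because $A(T)\cdot W\in[\n,\n]$ is orthogonal to $X\in\mathfrak{b}$. Non-null $C$ span $\cs$ and $[X,W]\in\n_s\subseteq\cs$, so $[X,W]=0$. Thus $[\mathfrak{b},\n_{s-1}]=0$; since $[[\n,\n],\n_{s-1}]\subseteq\n_{s+1}=0$ automatically, $\n_s=[\n,\n_{s-1}]=0$, contradicting $s\ge 3$. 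Hence $s\le 2$, i.e.\ $N$ is abelian or $2$-step.

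It remains to exhibit such a $\cs$ in each case, and this is where the signature hypotheses enter. In case \eqref{it:n'd} the form $\ip'$ is definite, so $\cs=\z\cap[\n,\n]$ (here $\z$ the center) is automatically nondegenerate, central, derivation-invariant and contains $\n_s$; this settles \eqref{it:n'd} in arbitrary ambient signature. The real difficulty is the Lorentz cases \eqref{it:n'lcnon} and \eqref{it:n'lnl}, where $\ip'$ is indefinite and $\z\cap[\n,\n]$ may be degenerate, its radical then being a single null line $\br\xi\subseteq[\n,\n]$. Repeating the computation with such a degenerate $\cs$ only gives $k(T)\,C+A(T)\cdot C\in\br\xi$, so that $[\mathfrak{b},\n_{s-1}]$ is pinned into $\br\xi$ rather than annihilated; controlling this single null direction is the crux.

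I therefore expect the main obstacle to be the elimination of that null radical in the presence of indefinite $\ip'$, the rest being the uniform algebra above. In case \eqref{it:n'lcnon} I would use the hypothesis that the centralizer of $[\n,\n]$ is nondegenerate precisely to exclude the null radical and recover a genuinely nondegenerate central subspace containing $\n_s$, reducing to the engine. In case \eqref{it:n'lnl}, where the whole metric is Lorentz, $\mathfrak{b}$ is positive definite and the unique timelike direction lies in $[\n,\n]$; here I would trace $\xi$ through the identity and exploit skew-symmetry of the $A(T)$ on the Lorentz space $[\n,\n]$ together with the definiteness of $\mathfrak{b}$ to produce a sign-definite quantity that vanishes only when $\xi=0$, thereby removing the surviving null line and again reducing to the engine.
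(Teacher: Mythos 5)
Your ``engine'' is sound as far as it goes. For $T=X+C$ with $X\in\mathfrak{b}=[\n,\n]^\perp$ and $C$ non-null in a subspace $\cs\subseteq[\n,\n]$ that is central, invariant under all derivations, and nondegenerate, your chain $k(T)\,C+A(T)\cdot C=0$, then $k(T)=0$ and $A(T)\cdot C=0$, then $\ad_X^*C=A(T)\cdot X$, then $\<[X,W],C\>=-\<A(T)\cdot W,X\>=0$ for $W\in\n_{s-1}$ is correct (each step uses only skew-symmetry, invariance of $\cs$ and of the lower central series, and $X\perp[\n,\n]$), and with $[\n_2,\n_{s-1}]\subseteq\n_{s+1}=0$ it indeed kills $\n_s$ for $s\ge 3$. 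Taking $\cs=\z\cap[\n,\n]$ this completely settles case~(\ref{it:n'd}), since every subspace of a definite space is nondegenerate; this is a valid alternative to the paper's (shorter) argument for~(\ref{it:n'd}), which instead observes via \eqref{eq:TXX} that $\ad_\g(T)|_{[\n,\n]}$ is nilpotent and skew relative to a definite form, hence zero.

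For cases (\ref{it:n'lcnon}) and (\ref{it:n'lnl}), however, you have written a plan, not a proof, and the part you defer is precisely the content of the theorem (these are the cases that eliminate the $4$-step possibility of \cite[Theorem~7]{CWZ}). Two concrete problems. First, hypothesis~(\ref{it:n'lcnon}) concerns the centralizer of $[\n,\n]$ in $\n$, which is in general much larger than the center and not contained in $[\n,\n]$; nondegeneracy of that centralizer does not in any evident way produce a nondegenerate \emph{central} subspace of $[\n,\n]$ containing $\n_s$. Indeed, nondegeneracy of $\z\cap[\n,\n]$ is a posteriori essentially equivalent to the $2$-step conclusion (in the $2$-step case $\z\cap[\n,\n]=[\n,\n]$), so ``excluding the null radical'' is not a reduction but the whole theorem; your phrases ``I would use the hypothesis \dots to exclude the null radical'' and ``I would trace $\xi$ \dots to produce a sign-definite quantity'' name no mechanism. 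Second, the paper's actual route through the Lorentz cases requires genuinely new structure that your uniform algebra does not supply: since $\ip'$ is Lorentz, $\ad_\g(\n)|_{[\n,\n]}$ is a subalgebra of $\so(m-1,1)$ consisting of nilpotent endomorphisms, hence by Engel's theorem and Mostow's conjugacy theorem it sits inside the nilpotent Iwasawa factor $\ug$, which is \emph{abelian}; this already yields $[[\n,\n],[\n,\n]]=0$ and encodes the action of $\vg$ on $[\n,\n]$ by a single map $\Phi$ as in \eqref{eq:advg}. The Jacobi identity \eqref{eq:Jac} then forces $\rk\Phi\le 2$, and the refined GO equations \eqref{eq:AYY}--\eqref{eq:AYX}, together with $\ad_\g(\h)$-invariance of $(\Ker\Phi)^\perp$ and its $2$-dimensionality, force $\omega_2=0$, contradicting $e_1\not\perp[\vg,\vg]$. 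Note also that in the paper case~(\ref{it:n'lnl}) is reduced to~(\ref{it:n'lcnon}) only \emph{after} abelianness of $[\n,\n]$ is established, so your proposed direct attack on~(\ref{it:n'lnl}) would need that structural input as well. In short: case~(\ref{it:n'd}) is proved; cases~(\ref{it:n'lcnon}) and~(\ref{it:n'lnl}) contain a genuine gap exactly at the point you flag as ``the crux.''
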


\begin{proof}
The first part of our argument is similar to part of the proof of 
\cite[Theorem~7]{CWZ}. Let $\n'=[\n,\n]$, and denote $m = \dim \n'$. 
We can assume that $m \ge 2$. 

Suppose that $\ip'$ is nondegenerate. Let $\vg$ denote the orthogonal 
complement to $\n'$ in $\n$. Then we have the $\ad_\g(\h)$-invariant 
orthogonal direct sum decomposition $\n=\n' \oplus \vg$. Let $T=X+Y$ 
and $T' = X'+Y'$ where $X,X' \in \n'$, $Y,Y' \in \vg$, and $T$ is non-null 
in \eqref{eq:golemma}.  Then $k(T)=0$ and we have $A=A(X,Y) \in \h$ such that
  \begin{equation}\label{eq:gonondeg}
  \<[A,X'],X\> + \<[A,Y'],Y\> + \<[X,X']+[Y,X']+[X,Y']+[Y,Y'],X\>=0.
  \end{equation}
Taking $Y'=Y, X'=0$ we obtain, by continuity,
  \begin{equation}\label{eq:YXX}
    \<[Y,X],X\>=0, \quad\text{for all } Y \in \vg, \, X \in \n'.
  \end{equation}
As $\vg$ generates $\n$ it follows that   
  \begin{equation}\label{eq:TXX}
    \<[T,X],X\>=0, \quad\text{for all } T \in \n, \, X \in \n'.
  \end{equation}
  
Separating the $X'$- and the $Y'$-components in \eqref{eq:gonondeg} and 
using \eqref{eq:YXX} and \eqref{eq:TXX}, we find that for all $X \in \n'$ 
and $Y \in \vg$ with $X+Y$ non-null, there exists $A=A(X,Y) \in \h$ such 
that for all $X' \in \n', \, Y' \in \vg$, 
  \begin{gather}\label{eq:AYY}
    \<[A,Y],Y'\> = \<[Y,Y'],X\>,\\
    [A+Y,X] =0. \label{eq:AYX}
  \end{gather}

Assertion~\eqref{it:n'd} now follows from~\eqref{eq:TXX}: for all $T \in \n$, 
the (nilpotent) operator $\ad_\g(T)|_{\n'}$ on $\n'$ is skew-symmetric relative 
to the definite inner product $\ip'$, and hence is zero, which implies 
$[\n,\n']=0$.  (This follows the argument of \cite[Theorem~2.2]{Gor}.) 

For \eqref{it:n'lcnon} and \eqref{it:n'lnl} we suppose that $\ip'$ 
is Lorentz. Denote $\s := \so(m -1, 1) \subset \gl(\n')$, the algebra of 
skew-symmetric endomorphisms relative to $\ip'$. Thus 
$\kg:= \ad_\g(\n)|_{\n'}$ is a subalgebra of $\s$ consisting of nilpotent 
endomorphisms.  As such,
using Engel's Theorem, it is triangular.  Thus it is conjugate by an inner 
automorphism \cite[Theorem~2.1]{Mos} to a subalgebra of the nilpotent part 
$\ug$ of an Iwasawa decomposition $\s=\t \oplus \ag \oplus \ug$.  Now we may 
(and do) assume $\kg \subset \ug$. 

Choose a basis $\{f_1, \dots, f_m\}$ for $\n'$ such that $\<f_i,f_j\>= \ve_i \delta_{ij}$, where $\ve_1=-1$ and $\ve_i = +1$ for $i>1$, and such that the maximal compact subalgebra $\t=\so(m-1)$ acts on $\Span(f_2,\dots,f_m)$, and the $1$-dimensional abelian subalgebra is given by $\ag = \left(\begin{smallmatrix} 0 & 1 & 0 \\ 1 & 0 & 0 \\ 0 & 0 & 0_{m-2} \end{smallmatrix}\right)\br$.  Here $0_{m-2}$ denotes the $(m-2)\times(m-2)$
zero matrix. Then $\ug$ is the space of matrices of the form $\left(\begin{smallmatrix} 0 & 0 & u^t \\ 0 & 0  & u^t\\ u & -u & 0_{m-2}
\end{smallmatrix}\right)$ where $u \in \br^{m-2}$. We introduce a new basis
for $\n'$ given by $e_1=(f_1+f_2)/\sqrt2, \, e_2=(f_1-f_2)/\sqrt2$ and
$e_i=f_i$ for $i > 2$. Relative to this basis, we have
  \begin{equation*}
    \ip|_{\n'} = \left ( \begin{smallmatrix} 0 & -1 & 0 \\ -1 & 0 & 0 \\
	0 & 0 & I_{m-2} \end{smallmatrix} \right ) \text{ and }
	\ug = \left\{ \left ( \begin{smallmatrix} 0 & 0 & u^t \\ 0 & 0 & 0 \\
	0 & u & 0_{m-2} \end{smallmatrix}\right ), \; u \in \br^{m-2} \right\}.
  \end{equation*}
As $\kg \subset \ug$, we obtain a linear map
$\Phi: \vg \to \Span(e_3, \dots, e_m)$ such that, for all $Y \in \vg$,
 \begin{equation}\label{eq:advg}
    \ad_\g(Y) e_1=0,\; \ad_\g(Y) e_2 = \Phi Y, \text{ and }
	\ad_\g(Y) e_i = \<\Phi Y,e_i\>e_1 \text{ for } i > 2.
 \end{equation}
As $\ug$ (and hence $\kg$) is abelian, $[[\vg,\vg],\n']=0$. Since $\vg$ generates $\n$, we obtain $[[\n,\n],\n']=0$, which implies that $\n'$ is
abelian. 

Now assertion~\eqref{it:n'lcnon} implies assertion~\eqref{it:n'lnl}. Indeed, 
the centralizer of $\n'$ is the direct sum of $\n'$ and a subspace $\cs$ of 
$\vg$, and if $\ip$ is Lorentz the inner product on $\vg$ is definite, which 
implies that it is also definite on $\cs$, and hence the centralizer of $\n'$ 
is nondegenerate.

To complete the proof of the theorem, it remains to establish 
assertion~\eqref{it:n'lcnon}. 

Introduce $2$-forms $\omega_i \in \Lambda^2(\vg)$ by
  \begin{equation}\label{eq:omega}
    [V_1,V_2]= \sum\nolimits_{i=1}^{m} \omega_i(V_1,V_2)e_i
	\text{  for  } V_1, V_2 \in \vg.
  \end{equation}
As $e_1 \in \n'$ and $\ip'$ is nondegenerate, we cannot have $e_1 \perp \n'$. But since $\n'$ is abelian we get $\n'= [\vg,\vg]+ [\vg,\n']$, and from \eqref{eq:advg} we obtain $e_1 \perp [\vg,\n']$.  Thus $e_1 \not\perp [\vg,\vg]$, which by \eqref{eq:omega} implies $\omega_2 \ne 0$.

Using \eqref{eq:advg} and \eqref{eq:omega}, the Jacobi identity gives
  \begin{equation}\label{eq:Jac}
    \sigma\Big(\omega_2(V_1,V_2)\Phi V_3 + \sum\nolimits_{i=3}^{m}
	\omega_i(V_1,V_2)\<\Phi V_3,e_i\>e_1\Big)=0,
  \end{equation}
where $\sigma$ denotes the cyclic permutation of $V_1,V_2,V_3 \in \vg$.
If $\rk \Phi \ge 3$, then for almost all triples $V_1,V_2,V_3 \in \vg$,
the vectors $\Phi V_1, \Phi V_2$ and $\Phi V_3 \in \Span(e_3, \dots, e_m)$ are linearly independent, so $\omega_2=0$ by
\eqref{eq:Jac}.  This is a contradiction, so $\rk \Phi \le 2$.

If $\Phi =0$, the algebra $\n$ is $2$-step nilpotent by \eqref{eq:advg}. Suppose $\Phi \ne 0$. 

The centralizer of $\n'$ in $\n$ is an $\ad_\g(\h)$-invariant ideal in $\n$, so its intersection with $\vg$, which is the subspace $\cs =\Ker \Phi \subset \vg$, is also $\ad_\g(\h)$-invariant. Then the subspace $\cs^\perp\subset\vg$ is $\ad_\g(\h)$-invariant as well. Note that $\dim \cs^\perp = \rk \Phi \in \{1,2\}$ by the above argument, and that $\ip|_{\cs}$ is nondegenerate by our assumption. Then $\ip|_{\cs^\perp}$ is also nondegenerate. As $\Phi\cs =0$, \eqref{eq:Jac} implies $\omega_2(\cs,\cs)=0$, where we take $V_1, V_2 \in \cs$ and $V_3 \in \cs^\perp$. Moreover, taking in \eqref{eq:AYY} $Y \in \cs, \, Y' \in \cs^\perp$ we obtain $[\cs, \cs^\perp]=0$, and in particular, $\omega_2(\cs,\cs^\perp)=0$. 

Since $\omega_2 \ne 0$, we must have $\omega_2(\cs^\perp,\cs^\perp) \ne 0$, 
and so $\dim \cs^\perp = 2$. Let $Y_1, Y_2$ be a basis for $\cs^\perp$ such 
that $\<Y_i, Y_j\> = \ve_i \delta_{ij}$, where $\ve_i = \pm 1$ for $i=1,2$. 
As $\cs^\perp$ is $\ad_\g(\h)$-invariant, $\ad_\g(\h)|_{\n}$ is skew-symmetric, 
we obtain $[A, Y_1]= \ve_1 \mu(A) Y_2$ and $[A, Y_2] = -\ve_2 \mu(A) Y_1$,
for any $A \in \h$, where $\mu$ is a linear functional on  
$\h$. In particular, $[A,[Y_1,Y_2]]=0$ for all $A \in \h$. Taking $X=[Y_1,Y_2]$ 
in \eqref{eq:AYX} we see that $[Y,[Y_1,Y_2]]=0$ for all $Y \in \vg$ such that 
$Y+[Y_1,Y_2]$ is non-null. Thus $[Y,[Y_1,Y_2]]=0$ for all $Y \in \vg$. But then 
\eqref{eq:advg} and \eqref{eq:omega} imply $\omega_2(Y_1,Y_2)\Phi Y=0$, 
so $\omega_2(Y_1,Y_2)=0$. So $\omega_2(\cs^\perp,\cs^\perp) = 0$, which is a 
contradiction.
\end{proof}

The Lorentz manifold case of Theorem \ref{th:dernondeg} is of special interest,
so we state it separately.

\begin{theorem}\label{th:lorentzdernondeg} 
Let $(M = G/H, ds^2)$ be a connected Lorentz $G$-geodesic orbit nilmanifold 
where $G = N \rtimes H$ with $N$ nilpotent.  Let $\ip$ denote the inner
product on $\n$ induced by $ds^2$. If $\ip|_{[\n.\n]}$ is nondegenerate 
then $N$ is abelian or $2$-step nilpotent.
\end{theorem}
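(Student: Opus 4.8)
The plan is to obtain this as an immediate corollary of Theorem~\ref{th:dernondeg}: that theorem carries the entire analytic and algebraic load, and all that remains is to check that the Lorentz hypothesis forces us into one of its cases \eqref{it:n'd}--\eqref{it:n'lnl}. Throughout write $\n'=[\n,\n]$ and $\ip'=\ip|_{\n'}$.

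First I would isolate the linear-algebra dichotomy coming from the signature assumption. Since $ds^2$ is Lorentz, the inner product $\ip$ on $\n$ has index $1$ (exactly one negative square). By hypothesis $\ip'$ is nondegenerate, so $\n=\n'\oplus\vg$ is an orthogonal direct sum with $\vg=(\n')^\perp$ also nondegenerate; by Sylvester's law of inertia the indices of $\ip|_{\n'}$ and $\ip|_{\vg}$ add up to the index of $\ip$, which is $1$. In particular the index of $\ip'$ is $0$ or $1$, so $\ip'$ is either positive definite or of Lorentz signature.

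I would then split accordingly. If $\ip'$ is definite we are in case~\eqref{it:n'd} of Theorem~\ref{th:dernondeg}, and if $\ip'$ is Lorentz then both $\ip$ and $\ip'$ are of Lorentz signature, which is exactly case~\eqref{it:n'lnl}. In either case Theorem~\ref{th:dernondeg} tells us that $N$ is abelian or $2$-step nilpotent, and since the two subcases are exhaustive the proof is complete.

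I do not expect any genuine obstacle here: the hard work---the Geodesic-Lemma reduction, the conjugation of the nilpotent algebra $\kg=\ad_\g(\n)|_{\n'}\subset\so(m-1,1)$ into the Iwasawa nilradical, and the Jacobi-identity argument bounding $\rk\Phi$---all lives inside Theorem~\ref{th:dernondeg}. The only point that merits a line of justification is the signature dichotomy above, namely that a nondegenerate subspace of a Lorentz inner product space cannot have index exceeding $1$. It is also worth noting that hypothesis~\eqref{it:n'lcnon} is not needed for this deduction: in Lorentz signature a nondegenerate $\n'$ automatically falls under \eqref{it:n'd} or \eqref{it:n'lnl}.
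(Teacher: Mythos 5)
Your proposal is correct and follows essentially the same route as the paper, which likewise obtains Theorem~\ref{th:lorentzdernondeg} as an immediate corollary of Theorem~\ref{th:dernondeg} via the signature dichotomy: a nondegenerate subspace of a Lorentz inner product space has index $0$ or $1$, landing in case~\eqref{it:n'd} or case~\eqref{it:n'lnl}. The only hairline point worth noting is that if $\dim[\n,\n]=1$ and the index is $1$ then $\ip'$ is negative definite rather than Lorentz, but that subcase still falls under case~\eqref{it:n'd} (definite signature), so your deduction is complete.
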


If the $GO$ condition in Theorems~\ref{th:dernondeg} and 
\ref{th:lorentzdernondeg} is replaced by the
natural reductivity condition, the complete description of all resulting 
nilmanifolds is given in \cite[Theorem~3.2]{Ova2}. There the construction in 
arbitrary signature is similar to the construction for Riemannian signature.

\section{The Double Extension Theorem}
\label{theorem2}

Given a metric Lie algebra $\m_0$ with a nondegenerate inner product 
$\ip_0$, say of signature $(p,q)$, let $\m_1$ be its central extension,
as in the exact sequence
\begin{equation}\label{m1}
0 \to \br e \to \m_1 \to \m_0 \to 0 \text{ where } 
	\m_1 = \br e \oplus \m_0
\text{ with $e \ne 0$ central in $\m_1$}
\end{equation}
where the arrows are Lie algebra homomorphisms.
Let a Lie algebra $\m_2=\br f \oplus \m_1$ be an extension of $\m_1$ by a 
nonzero derivation $f$ as follows.
\begin{equation}\label{m2}
0 \to \m_1 \to \m_2 \to \br f \to 0, \text{ Lie algebra exact sequence, }
\ad_{\m_2} (f)|_{\m_1} \in \Der{\m_1}.
\end{equation}
Then $(\m_2,\ip)$ is a Lie algebra with nondegenerate inner product $\ip$
of signature $(p+1,q+1)$ defined by 
\begin{equation}\label{m3}
\ip|_{\m_0}=\ip_0,\, \<e,\m_0\>=\<f,\m_0\>=0, \, \|e\|=\|f\|=0, 
\, \<e,f\>=1.
\end{equation}  
In particular, if $\ip_0$ is positive definite, so $\ip$ is of Lorentz
signature, then $(\m_2,\ip)$ is called
called the \emph{Lorentz double extension} of 
$(\m_0, \ip_0)$. The double extension is a well-known tool since in 
\cite{MR} it has been used for constructing bi-invariant pseudo-Riemannian 
inner products. Our approach is closer to that of \cite{YD}.

\begin{theorem}\label{th:derdeg} 
Let $(M = G/H, ds^2)$ be a connected Lorentz geodesic orbit nilmanifold, 
where $G = N \rtimes H$ with $N$ nilpotent. Let $\ip$ be the inner product 
on $\n$ defined by $ds^2$. Suppose that $\ip|_{[\n,\n]}$ is \emph{degenerate}. 
Then $(\n,\ip)$ is a Lorentz double extension of the metric Lie algebra 
corresponding to a Riemannian $GO$ nilmanifold (which necessarily 
is abelian or $2$-step nilpotent).
\end{theorem}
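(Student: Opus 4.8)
The plan is to manufacture the double-extension data $(\m_0,\ip_0)$, $e$, $f$ directly from the Lorentz geometry and the Geodesic Lemma, and then to show that the positive-definite quotient $\m_0$ is a Riemannian $GO$ nilmanifold, so that Gordon's theorem \cite[Theorem~2.2]{Gor} forces it to be abelian or $2$-step.

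First I would locate the null line of the construction. Set $\n'=[\n,\n]$ and let $\z=\n'\cap(\n')^\perp$ be the radical of $\ip'=\ip|_{\n'}$. Every vector of $\z$ is orthogonal to itself, so $\z$ is totally isotropic; since $\ip$ is Lorentz, a totally isotropic subspace has dimension at most $1$, and degeneracy of $\ip'$ then gives $\z=\br e$ with $e$ null. Because $e\in\n'$ we have $\n'\subseteq e^\perp$, so $\m_1:=e^\perp$ is a hyperplane with $[\n,\m_1]\subseteq[\n,\n]=\n'\subseteq\m_1$; thus $\m_1$ is an ideal of $\n$ containing $\n'$. Completing $e$ to a hyperbolic pair by a null vector $f$ with $\<e,f\>=1$ and putting $\m_0:=\{e,f\}^\perp$, I get $\ip|_{\m_0}$ positive definite, $\m_1=\br e\oplus\m_0$ and $\n=\br f\oplus\m_1$, exactly the vector-space shape of \eqref{m1}--\eqref{m3}; moreover $\ad_\g(f)|_{\m_1}\in\Der(\m_1)$ automatically, since $\ad_\g(f)$ is a derivation of $\n$ preserving the ideal $\m_1$.

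The one structural input that turns this decomposition into a genuine double extension is that $e$ is central in $\m_1$, i.e. $[e,\m_0]=0$; this is the step I expect to be the main obstacle. To obtain it I would rewrite the Geodesic Lemma \eqref{eq:golemma} in adjoint form: for each non-null $T$ there is $A=A(T)\in\h$ with $[A,T]=(\ad_\g T)^\ast T-k(T)\,T$, where $(\ad_\g T)^\ast$ is the metric adjoint. Since $H$ acts by isometric automorphisms, $\ad_\g(\h)$ is skew-symmetric and preserves $\n'$, its canonical radical $\br e$, and hence $e^\perp=\m_1$; combining this flag-preservation and skew-symmetry with the nilpotency of $\ad_\g(\n)$ and the adjoint relation above (evaluated on non-null $T\in\n'$ and on $T\in\m_0$) should force $\ad_\g(e)$ to annihilate $e^\perp$. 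Extracting $[e,\m_0]=0$ cleanly from these relations is the delicate part, because $\ip'$ is only positive semidefinite and $\ad_\g(e)$ itself is not skew-symmetric.

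Granting centrality, the remaining work is routine. The quotient $\m_0\cong\m_1/\br e$ is nilpotent, and $\ip_0:=\ip|_{\m_0}$ is positive definite, so $(\m_0,\ip_0)$ is a Riemannian metric nilpotent Lie algebra. For $T\in\m_0$ (necessarily non-null, so $k=0$) write $\ad_\g(A(T))T=\ad(A)_0T+(\ast)\,e$ with $\ad(A)_0T\in\m_0$. Skew-symmetry of $\ad_\g(A)$ and invariance of $\m_1$ make $\ad(A)_0$ skew with respect to $\ip_0$, while $\ad_\g(A)$ being a derivation of $\n$ together with $[e,\m_0]=0$ make $\ad(A)_0$ a derivation of $\m_0$. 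Projecting \eqref{eq:golemma} onto $\m_0$ (the $e$-component of $[T,T']$ drops out because $T\perp e$) then gives $\<[T+A_0,T']_{\m_0},T\>_0=0$ for all $T'\in\m_0$, with $A_0:=\ad(A)_0$ a skew derivation; this is precisely the Geodesic Lemma for the Riemannian nilmanifold $(\m_0,\ip_0)$. Hence $(\m_0,\ip_0)$ is a Riemannian $GO$ nilmanifold, so by Gordon's theorem it is abelian or $2$-step, and $(\n,\ip)$ is the asserted Lorentz double extension.
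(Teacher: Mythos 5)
Your construction of the double-extension data follows the paper's own proof step for step: the radical of $\ip|_{\n'}$ is a null line $\br e$ (your Lorentz-signature argument for $\dim\le 1$ is in fact spelled out more explicitly than in the paper), $\m_1=e^\perp$ is an $\ad_\g(\h)$-invariant ideal, the hyperbolic partner $f$ and $\m_0=\{e,f\}^\perp$ give the vector-space shape of \eqref{m1}--\eqref{m3}, and your final paragraph (the skew derivation $A_0$, the projection of \eqref{eq:golemma} onto $\m_0$, the Riemannian Geodesic Lemma, Gordon's theorem) is exactly the paper's closing argument with $D(X)=A_0$. However, there is a genuine gap at precisely the point you flag: the centrality of $e$ in $\m_1$, i.e.\ $[e,\m_0]=0$, is never proved, only ``granted.'' Your sketch (``flag-preservation and skew-symmetry \dots should force \dots'') is not an argument, and since without centrality the bracket on $\m_0\cong\m_1/\br e$ is not even defined, the whole theorem rests on this unproven claim.

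The fix is short and is available with the tools you already set up. Take $T=X\in\m_0$ (non-null, so $k(X)=0$) and test against $T'=e$ in \eqref{eq:golemma}: since $\br e$ is $\ad_\g(\h)$-invariant and $e\perp\m_1\supseteq\m_0$, the term $\<[A,e],X\>$ vanishes, leaving $\<[X,e],X\>=0$ for all $X\in\m_0$. (Equivalently, pair your adjoint identity $[A,T]=(\ad_\g T)^{\ast}T-k(T)T$ at $T=X$ with $e$: the left side is orthogonal to $e$ because $[A,X]\in\m_1$, and the right side gives $\<X,[X,e]\>$.) Now $[e,\m_0]\subseteq\n'\subseteq\m_1$, so one may write $\ad_\g(e)X=KX+\mu(X)e$ with $K$ an endomorphism of $\m_0$; the displayed identity says $K$ is skew-symmetric relative to the \emph{positive definite} $\ip_0$, and $K$ is nilpotent because it is the map induced by the nilpotent $\ad_\g(e)$ on $\m_1/\br e$, hence $K=0$. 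Then $[X,e]=-\mu(X)e$, and nilpotency of $\ad_\g(X)$ forces $\mu=0$, so $e$ is central in $\m_1$. Note that your stated worry --- that $\ip'$ is only semidefinite and $\ad_\g(e)$ is not skew-symmetric --- dissolves here: only the $\m_0$-block of $\ad_\g(e)$ needs to be skew, and that block acts on a definite space, which is exactly what kills it.
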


\begin{proof}
Suppose the restriction of $\ip$ to $\n'=[\n,\n]$ is degenerate. Let $e \in \n'$ be a nonzero null vector. Let $\vg$ denote the orthogonal
complement to $\n'$ in $\n$, and let $\m_1 = \n' + \vg$. We have $\n' \cap \vg = \br e$ and $\m_1 = e^\perp$.  Moreover, all four subspaces $\br e, \n', \vg$ and $\m_1$ are $\ad_\g(\h)$-invariant.

The subspace $\m_1$ is a degenerate hyperplane in $\n$ and is an $\ad_\g(\h)$-invariant ideal. We choose a null vector $f$ such that
$\br f \oplus \m_1 = \n$ and $\<f,e\>=1$ (note that the choice of such $f$ is not unique). Clearly $\m_1$ is $\ad_\g(f)$-invariant and $\ad_\g(f)|_{\m_1}$ acts on $\m_1$ as a nilpotent derivation. Moreover, the restriction of $\ip$ to $\Span(f,e)$ is Lorentz. Let $\m_0=(\Span(f,e))^\perp$. Define the inner product $\ip_0$ on $\m_0$ to be the restriction of $\ip$ to $\m_0$\,. Note that $\ip_0$ is positive definite.

According to our definition, to prove that $(\n,\ip)$ is a Lorentz double extension of $(\m_0, \ip_0)$, it remains to show that $e$ lies in the center of $\m_1$ (and then the Lie bracket $[\cdot,\cdot]_0$ on $\m_0$ is defined by requiring that $(\m_0,[\cdot,\cdot]_0)$ is isomorphic to the quotient algebra $\m_1/(\br e)$). To see that $e$ is central in $\m_1$, take $T = X \in \m_0$ and $T'=e$ in \eqref{eq:golemma}. Using the facts that $e \perp \m_0$ and that $\br e$ is $\ad_\g(\h)$-invariant, we obtain $\<\ad_\g(e) X, X\>=0$ for all $X \in \m_0$\,. Then for $X \in \m_0$ we have $\ad_\g(e) X = KX +\mu(X)e$ for some endomorphism $K$ of $\m_0$ and a linear form $\mu$ on $\m_0$. As $K$ is nilpotent and skew-symmetric relative to a positive definite inner product $\ip_0$, we get $K=0$, and so $[X,e]=-\mu(X)e$, for all $X \in \m_0$\,. Since $\ad_\g(X)$ is nilpotent, $e$ lies in the center of $\m_1$.

To complete the proof of the theorem we need to show that the metric Lie algebra $(\m_0, \ip_0)$ is the Lie algebra of a Riemannian GO nilmanifold.

Take a nonzero $X \in \m_0$. From the Geodesic Lemma, we can find $A(X) \in \h$ such that \eqref{eq:golemma} with $T = X$ holds for all $T' \in \n$, and in particular, for all $T'=Y \in \m_0$\,. Note that $k(X) = 0$, as $X$ is non-null. Define a skew-symmetric operator $D(X)$ on $(\m_0, \ip_0)$ by
$\<D(X)Y,Y'\>_0 = \<[A(X),Y],Y'\>$ for $Y,Y' \in \m_0$. As $\m_1$ and $\br e$ are $\ad_\g(A(X))$-invariant and $[e,\m_1]=0$, we find that $D(X)$ is a (skew-symmetric) derivation of the algebra $(\m_0,[\cdot,\cdot]_0)$. Then \eqref{eq:golemma} gives $\<[X,Y]_0,X\>_0 + \<D(X)Y,X\>_0=0$ (as $\<e,\m_1\>=0$) which completes the proof according to the Riemannian version of the Geodesic Lemma.
\end{proof}

\section{Examples Related To Degree of Nilpotence}
\label{examples}

In this section we use Theorem~\ref{th:derdeg} to show that the set of 
nilpotency steps of Lorentz $GO$ nilmanifolds is unbounded.
In our examples the group $G=N \rtimes H$ where $H = H(N)$ is the full 
group of isometric automorphisms of $(N,ds^2)$.  We construct both 
naturally reductive examples and examples that are not naturally reductive, 
first constructing the naturally reductive ones and then modifying them 
to get examples that are not naturally reductive.

Let $d > 1$ and let $S$ be an $s \times s$ matrix that is $d$-step nilpotent,
that is, $S^d=0$, but $S^{d-1} \ne 0$. For example $S$ could be
the $(d+1)\times (d+1)$ matrix $(S_{i,j})$ where $S_{i,i+1} = 1$ for
$1 \leq i \leq d$ and all other $S_{i,j} = 0$.  Introduce the following 
$(2s)\times(2s)$ matrices:
  \begin{equation}\label{eq:PQ}
  P = \left ( \begin{smallmatrix} 0_s & -S^t \\ 
	S & S^t-S\end{smallmatrix}\right ) \text{ and } Q = 
	\left ( \begin{smallmatrix} 0_s & S^t \\ S & S^t-S\end{smallmatrix}
	\right ).
  \end{equation}
The matrix $P$ is skew-symmetric and the matrix $Q$ is nilpotent. To see the 
latter, compute $Q=\left(\begin{smallmatrix} I_s & I_s \\ 0_s & I_s 
\end{smallmatrix}\right)\left(\begin{smallmatrix} -S & 0_s \\ S & S^t 
\end{smallmatrix}\right)\left(\begin{smallmatrix} I_s & -I_s \\ 0_s & I_s 
\end{smallmatrix}\right)$.  From that, $Q$ is nilpotent and $Q^{d-1} \ne 0$.

Following the idea of the construction in Theorem~\ref{th:derdeg} we 
start with a 
Riemannian metric Lie algebra $(\m_0,\ip_0)$.  Here $\m_0 = \br^{2s}$
as a vector space, $\ip_0$ is positive definite, and we fix an 
orthonormal basis $\cB$.  
The Lie algebra structure of $\m_0$ and a central extension 
$\m_1 = \br e \oplus \m_0$ (vector space direct sum), 
$0 \to \br e \to \m_1 \to \m_0 \to 0$, are given by
 \begin{equation} \label{eq:exmtoeg}
 \begin{aligned}
  &\<e,\m_1\>=0 \text{ and } \<X,Y\>=\<X,Y\>_0 \text{ for } X,Y \in \m_0 \\
  &[e,\m_1]=0 \text{ and } [X,Y]=\<PX,Y\>_0 \,e \text{ for } X,Y \in \m_0
 \end{aligned}
 \end{equation}
where $P$ has matrix \eqref{eq:PQ} relative to the basis $\cB$ of $\m_0$. 
Next, define the extension $\n$ of $\m_1$ by $\n=\br f \oplus \m_1$ (vector 
space direct sum), $0 \to \m_1 \to \n \to \br f \to 0$, 
with the Lie bracket and the inner product defined by 
\eqref{eq:exmtoeg} on $\m_1$, and additionally, by
 \begin{equation} \label{eq:exegton}
 \begin{aligned}
    &\<f,e\>=1 \text{ and } \<f,X\> = \<f,f\> = 0 \text{ for } X \in \m_0 \\
    &[f,e]=0 \text{ and } [f,X]=QX \text{ for } X \in \m_0
 \end{aligned}
 \end{equation}
with matrices relative to $\cB$ as before. The algebra $\n$ so constructed is 
nilpotent, and is of step at least $d$, as $\ad_\g(f)^{d-1}X = Q^{d-1}X \ne 0$ 
for some $X \in \m_0$.  Note $\dim \n = 2s+2$.  

We claim that $(\n,\ip)$ is geodesic orbit. To see this, 
let $T=\alpha f + X + \eta e \in \n$ where $X \in \m_0$ and 
$\alpha, \eta \in \br$.  Define $k(T)=0$ and $A(T) \in \h = {\rm Lie}(H)$  
in such a way that
 \begin{equation} \label{eq:AT}
 \begin{aligned}
  \ad_\g(A(T))& e = 0,\; \ad_\g(A(T)) f = v(T), \text{ and } 
	\ad_\g(A(T)) Y = -\<v(T),Y\>e \text{  for }Y \in \m_0,\\
  &\text{where } v(T)=(Q^t+P)X = 
	\left ( \begin{smallmatrix} 0_s & 0_s \\ 2S & 0_s
	\end{smallmatrix} \right ) X \in \m_0.
  \end{aligned}
  \end{equation}
It is easy to check that $\ad_\g(A(T))$ so defined is a skew-symmetric 
derivation of $(\n,\ip)$.

Using \eqref{eq:exmtoeg}, \eqref{eq:exegton} and \eqref{eq:AT}, for an 
arbitrary $T'=\beta f + Y + \rho e \in \n$ with $Y \in \m_0$ and 
$\beta, \rho \in \br$, we have 
$$
[T,T']=\alpha QY - \beta QX + \<PX,Y\>e \text{ and } 
	[A(T), T']=\beta v(T) - \<v(T),Y\>e,
$$ 
and so
 \begin{align*} 
  \<[T+A(T),T'],T\> 
	&= \<(\alpha QY - \beta QX + \beta v(T)) + (\<PX,Y\> - 
		\<v(T),Y\>)e,\alpha f + X + \eta e\> \\
  	&= \beta \<v(T)-QX,X\> +\alpha \<(Q^t+P)X-v(T),Y\>=0.
  \end{align*}
By the Geodesic Lemma, $(\n,\ip)$ is $GO$.
  
In fact $(\n,\ip)$ is ($G$-)naturally reductive. The geodesic graph $A(T)$ 
given by \eqref{eq:AT} is linear in $T$ (and $k=0$), so according to 
Proposition~\ref{p:natred}, we need to check that $A(T)$ is 
$\ad_\g(\h)$-equivariant, where $\h$ is the Lie algebra of skew-symmetric 
derivations of $(\n,\ip)$. Straightforward computation using \eqref{eq:exmtoeg} 
and \eqref{eq:exegton} shows that $\h$ is spanned by elements $B$ such that
  \begin{equation}\label{eq:exderalg}
  \begin{aligned}
    u:=&\ad_\g(B) f \in \m, \ad_\g(B) e = 0 \text{ and }
	\ad_\g(B) X = \Phi X - \<u, X\> e  
	\text{ for } X \in \m_0, \\
    &\text{ where } \Phi \in \so(\m_0) \text{ and } [\Phi ,P]=[\Phi,Q]=0 
	\text{ and } (Q^t+P)u=0.
  \end{aligned}
  \end{equation}
Then, whenever $T \in \n$ and $B \in \h$, we have 
$\ad_\g(A)(\ad_\g(B) T) = \ad_\g(B) \ad_\g(A(T))$ by a direct calculation, using 
\eqref{eq:AT}, \eqref{eq:exderalg} and the consequence $(Q^t+P)^2=0$ of 
\eqref{eq:PQ}.  This completes the proof that $(\n,\ip)$ is ($G$-)naturally 
reductive.

A manifold that is not naturally reductive can be constructed by taking the 
direct sum 
of an algebra constructed above and one that is not naturally reductive.
For example let $(\n_1, \ip_1)$ be a Lorentz 
algebra constructed above, and let $(\n_2, \ip_2)$ be a Riemannian $2$-step 
nilpotent Lie algebra defined by $\n_2 = \z \oplus \ag$, $\ag=\z^\perp=\br^4$, 
with Lie bracket defined as follows.  Let $\{z_1, z_2\}$ is an orthonormal 
basis for $\z$ and $J_1, J_2 \in \so(4)$ such that 
$J_iJ_j+J_jJ_i=-2\delta_{ij} I_4$ for $i,j=1,2$.  So $J_1$ and $J_2$ lie in 
the same $\so(3)$ factor of $\so(4)$ and are orthonormal relative to the 
Killing form, up to scale.  Then the Lie algebra structure on $\n_2$ is
given by
\begin{equation*}
  [\n_2,\z] = 0 \text{ and } [X,Y] = \<J_1X,Y\>_2 z_1 + \<J_2X,Y\>_2 z_2, 
	\text{ for } X, Y \in \ag.
  \end{equation*}
Then $(\n_2, \ip_2)$ is $GO$, but not naturally reductive 
(\cite[Proposition~3]{Kap}, \cite[Theorem~5.3(I)]{KV}). 
Note $\dim \n_2 = 6$.

Now define the Lorentz Lie algebra 
$(\n, \ip) = (\n_1,\ip_1) \oplus (\n_2,\ip_2)$ as the orthogonal direct
sum of $(\n_1,\ip_1)$ and $(\n_2,\ip_2)$.  Let $\pi_i: \n \to \n_i$ denote 
the orthogonal projections.  Let $H$ be the full group 
of skew-symmetric automorphisms of the resulting nilmanifold $(N,ds^2)$ and 
let $\h$ its Lie algebra. Note that $\h$ may a priori be bigger than 
the direct sum of the corresponding algebras for  $(\n_1,\ip_1)$ and 
$(\n_2,\ip_2)$.  But if $B \in \h$ then $(\pi_i B)|_{\n_i}$ is a skew symmetric 
derivation of $(\n_i,\ip_i)$.  Were $(\n,\ip)$ naturally reductive, it would
admit a linear geodesic graph $A: \n \to \h$ by Proposition~\ref{p:natred}, 
which would then give a linear geodesic graph $(\pi_2 A)|_{\n_2}$ for 
$(\n_2,\ip_2)$, in contradiction with Proposition~\ref{p:natred} and the
comment after its proof.

It would be very interesting to see which of these Lorentz $GO$ nilmanifolds 
are weakly symmetric.

We summarize the considerations of this section as they apply to the
cases where the nilpotent matrices $S$ are exemplified just before 
(\ref{eq:PQ}).

\begin{theorem}\label{th:examples}
For every number $d > 1$
there exist both a naturally reductive $GO$ Lorentz nilmanifold 
$(N,ds^2)$ nilpotent of step $\geq d$ and dimension $2d+4$, and a 
$GO$ Lorentz $d'$-step nilmanifold 
$(N_1,ds_1^2) \times (N_2,ds_2^2)$ with $d' \ge d$
and dimension $2d+10$.
\end{theorem}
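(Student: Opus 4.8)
The plan is to specialize the general construction carried out above in this section to the explicit nilpotent matrix $S$ exhibited just before \eqref{eq:PQ}, and then merely to read off the dimension and the nilpotency step of the resulting algebras. No new geometry is needed: the facts that each algebra produced by \eqref{eq:exmtoeg} and \eqref{eq:exegton} is $GO$ (via the geodesic graph \eqref{eq:AT} and the Geodesic Lemma) and is naturally reductive (via Proposition~\ref{p:natred}), and that the $6$-dimensional Riemannian algebra $\n_2 = \z \oplus \ag$ is $GO$ but not naturally reductive, have all been established in the body of the section. The theorem is therefore essentially a bookkeeping statement that packages those results for one concrete choice of data.

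For the naturally reductive assertion I would take $S$ to be the $(d+1)\times(d+1)$ shift matrix, so that $s = d+1$ and $S^{d-1}\ne 0$, and feed it into the construction. The resulting algebra $\n$ has $\dim\n = 2s+2 = 2d+4$, as required. The one point that genuinely uses the explicit form of $S$ is the lower bound on the nilpotency step: since $\ad_\g(f)^{d-1}X = Q^{d-1}X$, I must check that $Q^{d-1}\ne 0$. Here I would invoke the similarity displayed just after \eqref{eq:PQ}, which identifies $Q$ with the block--lower--triangular matrix $\left(\begin{smallmatrix} -S & 0_s \\ S & S^t \end{smallmatrix}\right)$; its $(d-1)$st power has top--left block $(-1)^{d-1}S^{d-1}\ne 0$, so $Q^{d-1}\ne 0$ and $\n$ is nilpotent of step $\ge d$. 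Combined with the $GO$ and natural reductivity already verified, this yields the first manifold.

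For the second assertion I would set $(\n_1,\ip_1)$ equal to the Lorentz algebra just constructed, of dimension $2d+4$, and $(\n_2,\ip_2)$ equal to the fixed $6$-dimensional Riemannian $2$-step algebra described above. Forming the orthogonal direct sum $(\n,\ip) = (\n_1,\ip_1)\oplus(\n_2,\ip_2)$ gives $\dim\n = (2d+4)+6 = 2d+10$, and its step is $d' = \max(\text{step}(\n_1),2)\ge d$ since $d>1$. The $GO$ property is inherited factorwise, and the failure of natural reductivity follows from the projection argument recorded above: a linear geodesic graph $A\colon\n\to\h$ would, after composition with $\pi_2$, restrict to a linear geodesic graph $(\pi_2 A)|_{\n_2}$ for $\n_2$, contradicting Proposition~\ref{p:natred} and the remark following its proof.

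The only step with content beyond reference--chasing is the verification $Q^{d-1}\ne 0$ for the explicit $S$, and even that reduces to the observation that a block--triangular nilpotent matrix inherits the nilpotency degree of its diagonal blocks. I therefore anticipate no real obstacle; the main care required is simply to keep the dimension count ($2s+2$ against $2d+4$, and the added $6$) and the step bound consistent with the specific size $s=d+1$ of the chosen Jordan block.
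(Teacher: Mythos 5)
Your proposal is correct and takes essentially the same approach as the paper: the paper's proof of Theorem~\ref{th:examples} is precisely a one-line summary of the section's construction specialized to the $(d+1)\times(d+1)$ shift matrix $S$, which is exactly what you spell out. Your bookkeeping ($s=d+1$, so $\dim\n=2s+2=2d+4$; the displayed similarity giving $Q^{d-1}\ne 0$ and hence step $\ge d$; the orthogonal sum with the $6$-dimensional Kaplan-type algebra giving dimension $2d+10$; and the projection argument via Proposition~\ref{p:natred} ruling out natural reductivity of the product) coincides with the considerations the paper relies on.
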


Note that the $GO$ Lorentz nilmanifolds constructed in Section \ref{examples},
in particular those cited in Theorem \ref{th:examples}, have nilpotence
bounds in sharp contrast to the bounds of Theorem \ref{th:dernondeg} and
the results of \cite{CWZ}.
We now compare the constructions of this section, including the case
of Theorem \ref{th:examples}, with the nilpotence bounds of  
\cite[Theorem 8]{CWZ}; there, if $\ip|_{[\n,\n]}$ is degenerate and 
the action of $\ad_\g(\h)$ on $\n$ is completely reducible
(semisimple), then $N$ is abelian or $2$-step nilpotent.  
The reason for the difference here is

\begin{proposition}\label{examples-unip}
The $GO$ Lorentz nilmanifolds constructed in Section \ref{examples} have the 
property that the action of ${\rm Ad}_G(H)$ on $\n$ is not completely reducible.
\end{proposition}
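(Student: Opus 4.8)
The plan is to show that the action of $\Ad_G(H)$ on $\n$ fails to be completely reducible by exhibiting an $\ad_\g(\h)$-invariant subspace that has no $\ad_\g(\h)$-invariant complement. The natural candidate is the invariant flag coming from the double extension structure: in the examples of Section~\ref{examples}, the line $\br e$ is central and $\ad_\g(\h)$-invariant, the hyperplane $\m_1 = e^\perp$ is an $\ad_\g(\h)$-invariant ideal, and $\br e \subset \m_1 \subset \n$. I would first verify directly from \eqref{eq:exderalg} that every $B \in \h$ satisfies $\ad_\g(B)e = 0$ and preserves $\m_1$, so that the chain $0 \subset \br e \subset \m_1 \subset \n$ is a flag of $\ad_\g(\h)$-invariant subspaces.

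The heart of the argument is to show that $\br e$ has no $\ad_\g(\h)$-invariant complement inside $\m_1$ (equivalently, that $\m_1$ is not completely reducible as an $\h$-module, whence neither is $\n$). If such a complement existed it would have to be a complement to $\br e$ that is invariant under every $B \in \h$. Using \eqref{eq:exderalg}, the action of a generic $B \in \h$ on $X \in \m_0$ is $\ad_\g(B)X = \Phi X - \<u,X\>e$, which carries a nonzero $e$-component whenever $u \ne 0$. The key computation is to exhibit an element $B \in \h$ with $u = \ad_\g(B)f \ne 0$, which forces any invariant complement of $\br e$ to be mapped outside itself (the $e$-component $-\<u,X\>e$ cannot be absorbed). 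Concretely, since $\h$ is spanned by elements of the form \eqref{eq:exderalg} and the constraint on $u$ is only $(Q^t+P)u = 0$, i.e. $Su = 0$ by \eqref{eq:AT}, the kernel of $S$ is nonzero (as $S$ is nilpotent), so there exist admissible $B$ with $u \ne 0$; this produces the desired obstruction.

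For the non-naturally-reductive product examples $(\n,\ip) = (\n_1,\ip_1) \oplus (\n_2,\ip_2)$, I would reduce to the first factor: the projection $\pi_1$ is $\ad_\g(\h)$-equivariant up to the structure already described, and the failure of complete reducibility on $\n_1$ pulls back to $\n$. More carefully, since $\br e \subset \n_1$ is $\ad_\g(\h)$-invariant and the argument above shows it admits no invariant complement within $\m_1 \subset \n_1$, the same line $\br e$ admits no invariant complement in $\n$; a semisimple module restricts to a semisimple submodule on any invariant subspace and quotients to a semisimple module, so non-complete-reducibility of the relevant subquotient of $\n_1$ obstructs complete reducibility of $\n$.

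The main obstacle I anticipate is verifying that one may genuinely realize an element $B \in \h$ with $u \ne 0$ rather than merely writing down a formal candidate: one must confirm that the map defined by \eqref{eq:exderalg} with a chosen $u \in \Ker S$ is actually a skew-symmetric derivation of $(\n,\ip)$, so that it truly lies in $\h$. This amounts to checking the Jacobi-type identities relating $\Phi$, $P$, $Q$ and $u$, and it is here that the constraints $[\Phi,P]=[\Phi,Q]=0$ and $(Q^t+P)u=0$ must be used in concert; taking $\Phi = 0$ and $u \in \Ker S \setminus \{0\}$ should be the cleanest choice. Once such a $B$ is in hand, the non-existence of an invariant complement to $\br e$ is a short linear-algebra argument, and the conclusion follows.
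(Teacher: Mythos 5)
Your proof is correct and is essentially the paper's own argument: both assume complete reducibility, observe that $\br e$ and $\m_1=(\br e)^\perp$ are $\ad_\g(\h)$-invariant, and contradict the existence of invariant complements via explicit elements of $\h$ that shift nontrivially along the flag $\br f \to \m_0 \to \br e \to 0$ --- the paper uses the geodesic-graph elements $A(T)$ of \eqref{eq:AT}, which are exactly your elements \eqref{eq:exderalg} with $\Phi=0$ and $u=v(T)$ (admissible since $(Q^t+P)^2=0$), while you allow arbitrary $u\in\Ker(Q^t+P)$. Two cosmetic corrections: the constraint $(Q^t+P)u=0$ forces only the first $s$-block of $u$ to lie in $\Ker S$ rather than ``$Su=0$'' (note $S$ is $s\times s$ while $u\in\br^{2s}$), so nonzero admissible $u$ exist even more readily than you claim; and in the direct-sum case the $\ad_\g(\h)$-invariance of $\br e$, where $\h$ may be strictly larger than the sum of the factors' derivation algebras, is best justified by the invariant characterization $\br e=[\n,\n]\cap[\n,\n]^\perp$, which is precisely how the paper's proof opens.
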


\begin{proof}
Fix a Lorentz nilmanifold as constructed in this section.
If we assume complete reducibility we obtain 
a contradiction as follows.  First note from (\ref{eq:PQ}),
(\ref{eq:exmtoeg}) and (\ref{eq:exegton}), that 
$[\n,\n] = Q\m_0 + \br e$ is ${\rm Ad}_G(H)$-invariant.  Thus
$[\n,\n] \cap [\n,\n]^\perp = \br e$ is ${\rm Ad}_G(H)$-invariant.
Now $(\br e)^\perp = \m_0 + \br e = \m_1$ is stable under ${\rm Ad}_G(H)$.
An invariant complement to $\m_1$ in $\n$ may be taken to be $\br f$.
So now $\m_0$, $\br e$ and $\br f$ are ${\rm Ad}_G(H)$-invariant, while
(\ref{eq:AT}) shows that some elements of $\ad_\g(\h)$ map $f$
into $\m_0$, $\m_0$ into $\br e$, and $e$ to $0$, with nonzero images.  
That contradicts complete reducibility of the action of ${\rm Ad}_G(H)$ on $\n$.
\end{proof}

\section{Remarks}\label{remarks}

Together, Theorem \ref{th:examples} and Proposition \ref{examples-unip} show
that the existence of a reductive decomposition 
$\g = \h \oplus \n$ is crucial in Theorems
\ref{th:dernondeg} and \ref{th:derdeg}.  This gives a good indication of the
difficulty of finding structural results for non-reductive $GO$ Lorentz 
nilmanifolds.  However it might be worthwhile to explore two special cases:
naturally reductive and weakly symmetric.

Recall that a pseudo-Riemannian manifold $(M,ds^2)$ is {\em weakly symmetric}
if, given $x \in M$ and a tangent vector $\xi \in M_x$\,, there is an isometry
$s_{x,\xi}$ of $(M,ds^2)$ such that $s_{x,\xi}(x) = x$ and $ds_{x,\xi}(\xi)
= \xi$; $(M,ds^2)$ is {\em symmetric} if we can always choose $s_{x,\xi}$
independent of $\xi$.  Let $(M,ds^2)$ be weakly symmetric and $G = I(M,ds^2)$
with $M = G/H$. 
In the Riemannian case \cite[Theorem 13.1.1]{W2007} the nilradical $N$ 
of $G$ is abelian or $2$-step nilpotent.  In general, if there is a
reductive decomposition $\g = \m + \h$ with $\n \subset \m$ and the metric
definite on $[\n,\n]$ then \cite[Theorem 4.12]{CW2012} $N$ is abelian or 
$2$-step nilpotent.  There $N$ does not have to be transitive on $M$.  This
suggests that Theorem \ref{th:dernondeg} might apply when $N$
is Lorentz, or perhaps even in general signature of
metric when the inner product on $[\n,\n]$ is definite or Lorentz.

One can consider the possibility of a converse to Theorem \ref{th:derdeg},
perhaps guided by the examples of Section \ref{examples}.
Let $(\n,\ip)$ be a double extension of a metric Lie algebra $(\n_0,\ip_0)$
corresponding to a Riemannian $GO$ manifold.  What are the conditions for
$(\n,\ip)$ to be $GO$?  Or naturally reductive?  Or weakly symmetric?  And
what if $(\n_0,\ip_0)$ corresponds to a Lorentz $GO$ manifold?

\end{document}